\theoremstyle{plain}
\newtheorem{theorem}{Theorem}[section]
\newtheorem{theoremX}{Theorem}
\newtheorem{corollary}[theorem]{Corollary}
\theoremstyle{definition}
\newtheorem{definition}{Definition}[section]
\theoremstyle{remark}
\newtheorem{remark}{Remark}[section]
\DeclareMathOperator*{\supess}{sup\,ess}
\newcommand{\N}{\mathbf N}
\newcommand{\R}{\mathbf R}
\newcommand{\e}{\varepsilon}
\renewcommand{\d}{\mathrm d}
\newcommand{\skp}[2]{\left<#1,#2\right>}
\newcommand{\luin}{\left|\!\left|\!\left|}
\newcommand{\ruin}{\right|\!\right|\!\right|}
\numberwithin{equation}{section} 
\begin{document}
\title{Operator versions of H\"older inequality and Hilbert $C^*$-modules} 

\author{Dragoljub J. Ke\v cki\' c}
\address{University of Belgrade\\ Faculty of Mathematics\\ Student\/ski trg 16-18\\ 11000 Beograd\\ Serbia}

\email{keckic@matf.bg.ac.rs}

\thanks{The author was supported in part by the Ministry of education and science, Republic of Serbia, Grant \#174034.}

\begin{abstract} Recently proved weighted Cauchy Scwarz inequality for Hilbert $C^*$-modules leads to many H\"older type inequalities for unitarily invariant norms on Hilbert space operators.
\end{abstract}


\subjclass[2010]{Primary: 47A30, Secondary: 47B10, 43A25}

\keywords{H\"older inequality, unitarily invariant
norm, elementary operator, inner type transformations, Hilbert modules.}

\maketitle

\section{Introduction}

Operator inequalities have been studied for many years. Discrete version of Cauchy-Schwarz inequality can be tracked back to \cite{SmithJFA1991} in connections with norm estimate of elementary operators and later in \cite{BhatiaDavisLAA1995}, \cite{BourinLAA2006} and many others. See, also recent work \cite{ABFM-AFA2015} and references therein. Continuous version in connection with Birman-Solomyak double operator integrals was considered in \cite{BananaJFA2005}.

In the last ten years, there was obtained many inequalities in the framework of Hilbert $C^*$-modules. Among others there are \cite{ABS-JMAA2011} or \cite{FFMS-JMAA2012}. In \cite{KeckicCS} we established that operator Cauchy-Schwarz inequalities can be derived from the corresponding Hilbert modules inequalities in a much more easier way then previously.

H\"older type inequalities (their discrete versions) was considered in \cite{AlbadawiPositivity2012} and \cite{AlbadawiBJMA2013} (see also references therein).

The aim of this note is to use weighted Cauchy-Scwarz inequality in Hilbert modules \cite[Theorem 5.1.]{SeoAFA2014} to obtain a very short and easy proof of many H\"older type inequalities for Hilbert space operators. They comprises known results (predominantly in discrete case) as well as completely new inequalities.

\section{Preliminaries}

To prove our main result we need the following:

\begin{definition} Let $A$ be a $C^*$-algebra and let $a$, $b\in A$ be positive elements. Its geometric mean $a\sharp b$ is defined as
$$a\sharp b=a^{1/2}(a^{-1/2}ba^{-1/2})^{1/2}a^{1/2},$$
if $a$ is invertible and $a\sharp b=\lim_{\varepsilon\downarrow 0+}(a+\varepsilon)\sharp b$ otherwise.
\end{definition}

It is well known that geometric mean has the following properties:

\begin{enumerate}
\item $a\sharp b=(ab)^{1/2}$ if $a$ and $b$ commute;
\item $a\sharp b=b\sharp a$;
\item\label{Positive} $a\sharp b=\max\left\{0\le x\in A\;|\;\left[\begin{matrix}a&x\\x&b\end{matrix}\right]\ge0\right\}$.
\end{enumerate}

For more details see \cite{Ando1978}. For the third property, see also \cite{PuszWoronowicz}.

Our main result relies on the following two theorems, first established by and the second by Horn and Mathias.

{\renewcommand{\thetheoremX}{S}%
\begin{theoremX}\label{SeoThm} \emph{(\cite[Theorem 5.1]{SeoAFA2014})} Let $T$ be a bounded adjointable operator on a Hilbert $C^*$-module $M$ that has a polar decomposition, let $x\in M$ be arbitrary, let $f$, $g:[0,+\infty)\to[0,+\infty)$ be continuous functions such that $f(t)g(t)\equiv t$. If $\skp x{Ty}=u|\skp x{Ty}|$ then
\begin{equation}\label{SeoWeighted}
|\skp x{Ty}|\le u^*\skp x{f(|T^*|)^2x}u\sharp\skp y{g(|T|)^2y}.
\end{equation}
\end{theoremX}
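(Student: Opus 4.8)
The plan is to exhibit $|\skp x{Ty}|$ as one of the positive elements $z$ for which $\left[\begin{matrix}a&z\\z&b\end{matrix}\right]\ge0$, with $a=u^*\skp x{f(|T^*|)^2x}u$ and $b=\skp y{g(|T|)^2y}$, and then to invoke the maximality property~(\ref{Positive}) of the geometric mean.

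First I would rewrite the operator $T$. Fixing a polar decomposition $T=V|T|$, the standard intertwining relations give $|T^*|=V|T|V^*$ and $V^*V|T|=|T|$; passing from $|T^*|^nV=V|T|^n$ to arbitrary continuous functions (the constant term of an approximating polynomial does no harm, since scalars commute with $V$) yields $h(|T^*|)V=Vh(|T|)$ for every continuous $h$ on $[0,\left\|T\right\|]$. Since $f(t)g(t)\equiv t$ forces $|T^*|=f(|T^*|)g(|T^*|)$ by functional calculus, this produces the key identity
$$T=|T^*|V=f(|T^*|)g(|T^*|)V=f(|T^*|)\,V\,g(|T|).$$

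Next I would set $\xi=f(|T^*|)x$ and $\eta=Vg(|T|)y$ in $M$. By self-adjointness of $f(|T^*|)$ and the identity above one has $\skp x{Ty}=\skp\xi\eta$ and $\skp x{f(|T^*|)^2x}=\skp\xi\xi$, while $\skp\eta\eta=\skp{g(|T|)y}{V^*Vg(|T|)y}\le\skp y{g(|T|)^2y}$ because $V^*V\le1$. The Gram matrix $\left[\begin{matrix}\skp\xi\xi&\skp\xi\eta\\\skp\eta\xi&\skp\eta\eta\end{matrix}\right]$ is a positive element of $M_2(A)$; conjugating by $\mathrm{diag}(u^*,1)$ preserves positivity, and replacing the $(2,2)$ entry by the larger element $\skp y{g(|T|)^2y}$ only adds a positive matrix. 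Using $u^*\skp\xi\eta=u^*\skp x{Ty}=u^*u|\skp x{Ty}|=|\skp x{Ty}|$ (the last equality being a standard property of the polar decomposition), both off-diagonal entries become the positive element $c:=|\skp x{Ty}|$, so that
$$\left[\begin{matrix}u^*\skp x{f(|T^*|)^2x}u&c\\c&\skp y{g(|T|)^2y}\end{matrix}\right]\ge0.$$
Now property~(\ref{Positive}), with $a$ and $b$ as above, identifies $a\sharp b$ as the greatest positive $z$ satisfying $\left[\begin{matrix}a&z\\z&b\end{matrix}\right]\ge0$; since $c$ is such a $z$, we conclude $c\le a\sharp b$, which is~\eqref{SeoWeighted}.

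I expect the only genuinely delicate point to be the operator identity $T=f(|T^*|)Vg(|T|)$, i.e.\ moving the continuous functional calculus of $|T|$ through the partial isometry, together with keeping the phase $u$ (which, for non-unital $A$, lies in the multiplier or enveloping von Neumann algebra) controlled so that every displayed expression lands in $A$. The remaining ingredients — positivity of Gram matrices over $C^*$-modules, stability of positivity under $*$-conjugation and under enlarging a diagonal entry, and the cited description of $\sharp$ — are routine.
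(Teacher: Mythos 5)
Your argument is correct, and its skeleton coincides with the paper's: use the polar decomposition $T=V|T|$ and the splitting $t=f(t)g(t)$ to rewrite $\skp x{Ty}$ as an inner product of two auxiliary vectors, then apply a geometric-mean Cauchy--Schwarz step. The genuine difference is that where the paper black-boxes that last step by citing the module inequality (\ref{CSsharp}) from the literature, you reprove it on the spot from positivity of the Gram matrix together with the maximality characterization (\ref{Positive}) of $\sharp$; this makes the proof self-contained and, as a bonus, your intermediate positive matrix $\left[\begin{matrix}\skp x{f(|T^*|)^2x}&\skp x{Ty}\\\skp x{Ty}^*&\skp y{g(|T|)^2y}\end{matrix}\right]$ is exactly the one needed in the proof of Theorem \ref{glavna}, so the conjugation back and forth by $\mathrm{diag}(u,1)$ there could be skipped. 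A second, smaller difference: you put the functional calculus on the $|T^*|$ side, taking $\xi=f(|T^*|)x$ so that $\skp\xi\xi$ is literally $\skp x{f(|T^*|)^2x}$, at the price of the harmless estimate $\skp\eta\eta\le\skp y{g(|T|)^2y}$ from $V^*V\le1$; the paper instead moves $U^*$ onto $x$ and then identifies $Uf(|T|)^2U^*=f(|T^*|)^2$ via intertwining. Your one-sided identity $h(|T^*|)V=Vh(|T|)$ handles the constant term of the approximating polynomials more cleanly than the paper's ``even terms'' remark. The only point to keep an eye on is the step $u^*u\left|\skp x{Ty}\right|=\left|\skp x{Ty}\right|$: this requires $u$ to be the canonical partial isometry (or an isometric extension thereof) rather than an arbitrary solution of $\skp x{Ty}=u\left|\skp x{Ty}\right|$, but that is the same convention the paper itself relies on, so it is not a gap relative to the intended statement.
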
%
}

\begin{proof} This is proved in \cite{SeoAFA2014} for $f(t)=t^\alpha$, $g(t)=t^\beta$, $\alpha$, $\beta\ge0$, $\alpha+\beta=1$, and for $T$ that has adjoint and such that the closures of the ranges of both $T$ and $T^*$ are complemented. However, the proof relies only on the fact that $T$ has a polar decomposition $T=U|T|$. 

For the convenience of the reader, we shall give the outline of the proof. First, recall the inequality
\begin{equation}\label{CSsharp}
\left|\skp xy\right|\le u^*\skp xxu\sharp\skp yy,\qquad\skp xy=u\left|\skp xy\right|,
\end{equation}
proved in \cite{FFMS-JMAA2012}. Next,
$$\left|\skp x{Ty}\right|=\left|\skp{x}{U|T|y}\right|=\left|\skp{f(|T|)U^*x}{g(|T|)y}\right|,$$
and apply (\ref{CSsharp}):
\begin{multline*}\left|\skp x{Ty}\right|\le u^*\skp{f(|T|)U^*x}{f(|T|)U^*x}u\sharp\skp{g(|T|)y}{g(|T|)y}=\\
    =u^*\skp{x}{Uf(|T|)^2U^*x}u\sharp\skp{y}{g(|T|)^2y}.
\end{multline*}

Finally, note that $Up(|T|)U^*=p(|T^*|)$ for all polynomials that contain only even terms, and henceforth for all continuous functions by standard limit process.
\end{proof}

\begin{remark}It will be important that $u$ can be extended either to isometry or to coisometry i.e.\ either $u^*u=1$ or $uu^*=1$.
\end{remark}

{\renewcommand{\thetheoremX}{HM}%
\begin{theoremX}\label{HornThm} \emph{(\cite[Theorem 2.3 and remark after if -- formula (2.11)]{HornLAA1990})}
Let $A$, $B$ and $X$ be operators on a Hilbert space $H$ such that $\left[\begin{matrix}A&X\\X^*&B\end{matrix}\right]\ge0$. Then for all unitarily invariant norms $\luin\cdot\ruin$, and all $p$, $q$, $r>0$ such that $1/p+1/q=1$ there holds
\begin{equation}\label{HornHolder}
\luin\,|X|^r\,\ruin\le\luin A^{pr/2}\ruin^{1/p}\luin B^{qr/2}\ruin^{1/q}.
\end{equation}
\end{theoremX}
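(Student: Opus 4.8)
The plan is to reduce \eqref{HornHolder} to two classical facts about singular values: the log-majorization inequality $\prod_{j=1}^{k}s_j(PQ)\le\prod_{j=1}^{k}s_j(P)s_j(Q)$ for products, and the Fan dominance principle together with H\"older's inequality for symmetric gauge functions. Throughout, $s_j(\cdot)$ denotes the non-increasing sequence of singular values and $\Phi$ the symmetric gauge function associated with $\luin\cdot\ruin$, so that $\luin Z\ruin=\Phi\bigl((s_j(Z))_j\bigr)$. First I would factor the positive block operator: with $M=\left[\begin{matrix}A&X\\X^*&B\end{matrix}\right]$, $T=M^{1/2}$ acting on $H\oplus H$, and $C,D\colon H\to H\oplus H$ defined by $Ch=T(h\oplus0)$, $Dh=T(0\oplus h)$, a short inner product computation gives
\begin{equation*}
C^*C=A,\qquad D^*D=B,\qquad C^*D=X,
\end{equation*}
with no invertibility assumption on $A$ or $B$. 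Hence $|C|=A^{1/2}$, $|D|=B^{1/2}$, so $s_j(C)=s_j(C^*)=s_j(A)^{1/2}$ and $s_j(D)=s_j(B)^{1/2}$ for all $j$.

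Next I would apply the log-majorization inequality with $P=C^*$ and $Q=D$, which together with the identities just noted gives, for every $k$,
\begin{equation*}
\prod_{j=1}^{k}s_j(X)\le\prod_{j=1}^{k}s_j(A)^{1/2}s_j(B)^{1/2}.
\end{equation*}
Raising both sides to the power $r$ and using $1/p+1/q=1$ to rewrite $s_j(A)^{r/2}=s_j(A^{pr/2})^{1/p}$ and $s_j(B)^{r/2}=s_j(B^{qr/2})^{1/q}$, this becomes, for every $k$,
\begin{equation*}
\prod_{j=1}^{k}s_j(|X|^r)\le\prod_{j=1}^{k}s_j(A^{pr/2})^{1/p}\,s_j(B^{qr/2})^{1/q};
\end{equation*}
that is, $\bigl(s_j(|X|^r)\bigr)_j$ is weakly log-majorized by $\bigl(s_j(A^{pr/2})^{1/p}s_j(B^{qr/2})^{1/q}\bigr)_j$.

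To turn this into the norm estimate, I would use that $t\mapsto e^{t}$ is increasing and convex, so weak log-majorization implies ordinary weak majorization; by Fan dominance $\Phi$ is monotone under weak majorization, whence
\begin{equation*}
\luin\,|X|^r\,\ruin\le\Phi\Bigl(\bigl(s_j(A^{pr/2})^{1/p}s_j(B^{qr/2})^{1/q}\bigr)_j\Bigr).
\end{equation*}
Finally, H\"older's inequality for symmetric gauge functions, $\Phi\bigl((\alpha_j\beta_j)_j\bigr)\le\Phi\bigl((\alpha_j^{p})_j\bigr)^{1/p}\Phi\bigl((\beta_j^{q})_j\bigr)^{1/q}$ --- itself a consequence of Young's inequality and the sublinearity and monotonicity of $\Phi$ --- applied with $\alpha_j=s_j(A^{pr/2})^{1/p}$ and $\beta_j=s_j(B^{qr/2})^{1/q}$, bounds the right-hand side above by $\Phi\bigl((s_j(A^{pr/2}))_j\bigr)^{1/p}\Phi\bigl((s_j(B^{qr/2}))_j\bigr)^{1/q}=\luin A^{pr/2}\ruin^{1/p}\luin B^{qr/2}\ruin^{1/q}$, which is \eqref{HornHolder}.

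I expect the only real difficulty to be bookkeeping: keeping the three parameters $p$, $q$, $r$ aligned through the log-majorization step, and observing that when $X$ is not compact the inequality is read in $[0,+\infty]$. As a variant that avoids the factorization, one may instead write $X=u|X|$, conjugate $M$ by $\mathrm{diag}(u^*,1)$ to get $\left[\begin{matrix}u^*Au&|X|\\|X|&B\end{matrix}\right]\ge0$, deduce $|X|\le u^*Au\,\sharp\,B$ from property~\eqref{Positive} of the geometric mean, and close with the log-majorization $\prod_{j=1}^{k}\lambda_j(u^*Au\,\sharp\,B)\le\prod_{j=1}^{k}\lambda_j(u^*Au)^{1/2}\lambda_j(B)^{1/2}\le\prod_{j=1}^{k}s_j(A)^{1/2}s_j(B)^{1/2}$ for the geometric mean, which delivers the same weak log-majorization as before.
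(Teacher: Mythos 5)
Your proof is correct and is essentially the same argument as in the cited source: the paper itself does not reprove Theorem HM but only cites Horn--Mathias and remarks that the matrix proof carries over to Hilbert space operators, and that proof proceeds exactly via your factorization $A=C^*C$, $B=D^*D$, $X=C^*D$, Horn's log-majorization for products, Fan dominance, and H\"older for symmetric gauge functions. Your write-up in effect supplies the verification the paper only asserts, with the right caveat (reading $\luin\cdot\ruin$ in $[0,+\infty]$ for operators outside the norm ideal).
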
%
}

\begin{remark} This theorem is proved for matrices. However, carefully reading the proof we can see that everything works in the framework of Hilbert space operators, as well.
\end{remark}

\section{Unitarily invariant norm inequalities}

The following general theorem leads to a large number of H\"older type inequalities.

\begin{theorem}\label{glavna} Let $M$ be a Hilbert $C^*$-module over the algebra $B(H)$ of all bounded operators on some Hilbert space, let $x$, $y\in M$ and let $T:M\to M$ be adjointable operator that has a polar decomposition. If $f$, $g:[0,+\infty)\to[0,+\infty)$ are continuous functions such that $f(t)g(t)\equiv t$ then
\begin{equation}\label{HilbertPQR}
\luin\,\left|\skp{x}{Ty}\right|^r\,\ruin\le\luin\skp{x}{f(|T^*|)^2x}^{pr/2}\ruin^{1/p}\luin\skp{y}{g(|T|)^2y}^{qr/2}\ruin^{1/q},
\end{equation}
for all unitarily invariant norms $\luin\cdot\ruin$ and all $p$, $q$, $r>0$ such that $1/p+1/q=1$.
\end{theorem}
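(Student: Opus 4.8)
The plan is to combine the two theorems quoted in the Preliminaries in the obvious way. Start from Theorem~\ref{SeoThm}: writing $\skp x{Ty}=u|\skp x{Ty}|$, inequality~(\ref{SeoWeighted}) gives
$$|\skp x{Ty}|\le u^*\skp x{f(|T^*|)^2x}u\sharp\skp y{g(|T|)^2y}.$$
Set $A=\skp x{f(|T^*|)^2x}$ and $B=\skp y{g(|T|)^2y}$; these are positive elements of the $C^*$-algebra $B(H)$ (or of $\mathcal L(M)$, but since $M$ is a module over $B(H)$ the inner products land in $B(H)$, which is exactly the setting needed for Theorem~\ref{HornThm}). By the remark after Theorem~\ref{SeoThm}, $u$ can be taken to be an isometry or a coisometry; I would use $u^*u=1$. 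Then, by property~(\ref{Positive}) of the geometric mean, the element $X:=u^*Au\sharp B$ satisfies
$$\left[\begin{matrix}u^*Au&X\\X&B\end{matrix}\right]\ge0,$$
and moreover $|\skp x{Ty}|\le X$, so that $|\skp x{Ty}|^r\le X^r$ for $r>0$ (using operator monotonicity only in the form: $0\le S\le X$ implies $\luin S^r\ruin\le\luin X^r\ruin$ for unitarily invariant norms — this is a standard Weyl-monotonicity fact, but one must be slightly careful since $t\mapsto t^r$ is operator monotone only for $r\le1$; at the level of singular values/eigenvalues it is still monotone, which is what unitarily invariant norms see).

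Next I would apply Theorem~\ref{HornThm} to the positive block matrix $\left[\begin{smallmatrix}u^*Au&X\\X^*&B\end{smallmatrix}\right]$ (here $X=X^*\ge0$), obtaining for all unitarily invariant norms and all $p,q,r>0$ with $1/p+1/q=1$:
$$\luin X^r\ruin\le\luin (u^*Au)^{pr/2}\ruin^{1/p}\luin B^{qr/2}\ruin^{1/q}.$$
Combining with $\luin |\skp x{Ty}|^r\ruin\le\luin X^r\ruin$ yields
$$\luin |\skp x{Ty}|^r\ruin\le\luin (u^*Au)^{pr/2}\ruin^{1/p}\luin B^{qr/2}\ruin^{1/q}.$$
It remains to replace $(u^*Au)^{pr/2}$ by $A^{pr/2}$ inside the norm, i.e.\ to show $\luin (u^*Au)^{pr/2}\ruin\le\luin A^{pr/2}\ruin$. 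This is where the isometry/coisometry remark is used: if $u^*u=1$ then for the positive operator $A$ one has $u^*Au = u^*A^{1/2}\cdot A^{1/2}u$, and conjugation by a (co)isometry is a contraction for unitarily invariant norms in the sense that $\luin (u^*Au)^s\ruin = \luin |A^{1/2}u|^{2s}\ruin \le \luin |A^{1/2}|^{2s}\ruin = \luin A^s\ruin$, because the nonzero singular values of $A^{1/2}u$ are dominated by those of $A^{1/2}$ (pad with a coisometry as needed). Choosing $s=pr/2$ gives the claim, and substituting back the definitions $A=\skp x{f(|T^*|)^2x}$, $B=\skp y{g(|T|)^2y}$ produces exactly~(\ref{HilbertPQR}).

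The main obstacle I anticipate is the handling of $u$: Theorem~\ref{SeoThm} only guarantees a partial isometry $u$ with $u^*u$ a projection, so the inequality it gives involves $u^*Au$ rather than $A$, and one genuinely needs the observation (flagged in the remark) that $u$ extends to an isometry or coisometry, together with the fact that such a conjugation does not increase unitarily invariant norms of powers. A secondary, more routine point to get right is the passage $0\le |\skp x{Ty}|\le X \Rightarrow \luin |\skp x{Ty}|^r\ruin\le\luin X^r\ruin$ for all $r>0$; I would invoke the standard fact that $S\le X$ for positive operators implies $s_j(S)\le s_j(X)$ for all $j$ (Weyl monotonicity of eigenvalues under the order), whence $s_j(S^r)\le s_j(X^r)$ and the norm inequality follows from the Ky Fan dominance characterization of unitarily invariant norms. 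Everything else is a direct substitution.
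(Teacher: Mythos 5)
Your proof is correct, and it reaches (\ref{HilbertPQR}) by a genuinely different route through the two quoted theorems. The paper also starts from Theorem~\ref{SeoThm} and property~(\ref{Positive}), but it places $\left|\skp x{Ty}\right|$ itself in the off-diagonal corner, conjugates the resulting $2\times2$ block by $\mathrm{diag}(u,1)$ with $u$ extended to a \emph{coisometry} ($uu^*=1$) to turn the corners into $\skp{x}{f(|T^*|)^2x}$ and $\skp x{Ty}$, and then applies Theorem~\ref{HornThm} directly (falling back on $\skp x{Ty}^*=\skp y{T^*x}$ and swapping the roles of $x,y,T,T^*,f,g$ when $u$ only extends to an isometry). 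You instead keep $u^*Au$ in the corner, put the geometric mean itself in the off-diagonal, and strip the $u$'s afterwards via $s_j(A^{1/2}u)\le\|u\|\,s_j(A^{1/2})$; this costs you the extra Weyl-monotonicity step $0\le S\le X\Rightarrow s_j(S^r)\le s_j(X^r)$, which you justify correctly. Two remarks on what each version buys. First, your worry about isometry versus coisometry is unfounded for your own argument: the estimate $\luin(u^*Au)^{s}\ruin\le\luin A^{s}\ruin$ only needs $\|u\|\le1$, which any partial isometry (or any extension of it) satisfies, so you need no case distinction at all --- an advantage over the paper's proof. Second, and more substantively, your choice to load the block matrix with the geometric mean rather than with $\left|\skp x{Ty}\right|$ sidesteps a delicate point: the inference ``$0\le z\le a\sharp b$ implies $\left[\begin{smallmatrix}a&z\\z&b\end{smallmatrix}\right]\ge0$'' that the paper's wording suggests is false in general (it amounts to monotonicity of $z\mapsto za^{-1}z$; take $a=I$ and the classical $0\le x\le y$ with $x^2\not\le y^2$). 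The paper's block matrix is in fact positive, but for a different reason (it is a Gram matrix of $f(|T|)U^*x$ and $g(|T|)y$), so your detour through the maximality characterization plus singular-value monotonicity is the logically safer of the two arguments.
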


\begin{proof} Suppose $\skp x{Ty}=u\left|\skp x{Ty}\right|$ where $u$ is extended to an coisometry, i.e.\ $uu^*=1$. Denote
$$X=\skp{x}{f(|T^*|)^2x},\qquad Y=\skp{y}{g(|T|)^2y}.$$
By Theorem \ref{SeoThm} we have $\left|\skp x{Ty}\right|\le u^*Xu\sharp Y$. Next, by property (\ref{Positive}) of geometric mean, we have
$$\left[\begin{matrix}u^*Xu&\left|\skp x{Ty}\right|\\\left|\skp x{Ty}\right|&Y\end{matrix}\right]\ge0.$$
and therefore
$$0\le\left[\begin{matrix}u&0\\0&1\end{matrix}\right]\left[\begin{matrix}u^*Xu&\left|\skp x{Ty}\right|\\\left|\skp x{Ty}\right|&Y\end{matrix}\right]\left[\begin{matrix}u^*&0\\0&1\end{matrix}\right]=\left[\begin{matrix}X&\skp x{Ty}\\\skp x{Ty}^*&Y\end{matrix}\right]$$
Hence by Theorem \ref{HornThm}, formula \ref{HilbertPQR} follows.

If $u$ can not be extended to a coisometry then we pick $\skp x{Ty}^*=\skp{y}{T^*x}$ which has a polar decomposition $\skp x{Ty}^*=v\left|\skp x{Ty}^*\right|$ with $v$ extendable to a coisometry. Now result follows by interchanging roles of $x$ and $y$, $T$ and $T^*$, $f$ and $g$.
\end{proof}

Next, we give corollaries.

\begin{corollary}\label{PosledicaNeprekidna} Let $(\Omega,\mu)$ be a measurable space. We consider the Hilbert module $L^2(\Omega,\mu)$ consisting of all weakly measurable families $A_t\in B(H)$, $t\in\Omega$ such that $\int_\Omega A_t^*A_t\d\mu(t)$ exists in the weak-$*$ sense, see \cite[Example 2.3.]{KeckicCS}.

\emph{($i$)} If $A_t$, $B_t\in L^2(\Omega)$, $X_t\in L^\infty(\Omega)$, i.e.\ $\sup\|X_t\|<+\infty$ then
$$\luin\,\left|\int_\Omega A_t^*X_tB_t\d\mu(t)\right|^r\,\ruin\le\luin\left(\int_\Omega A_t^*f(|X_t^*|)^2A_t\right)^{\frac{pr}2}\ruin^{\frac1p}\luin\left(\int_\Omega B_t^*g(|X_t|)^2B_t\d\mu(t)\right)^{\frac{qr}2}\ruin^{\frac1q}.$$
In particular
$$\luin\,\left|\int_\Omega A_t^*X_tB_t\d\mu(t)\right|^r\,\ruin\le\luin\left(\int_\Omega A_t^*|X_t^*|^{2\alpha}A_t\right)^{\frac{pr}2}\ruin^{\frac1p}\luin\left(\int_\Omega B_t^*|X_t|^{2\beta}B_t\d\mu(t)\right)^{\frac{qr}2}\ruin^{\frac1q},$$
where $\alpha$, $\beta\ge0$, $\alpha+\beta=1$.

\emph{($ii$)} If, in addition, $\mu(\Omega)<+\infty$ and $r\ge2$ then
$$\luin\left|\int_\Omega A_t^*X_tB_t\d\mu(t)\right|^r\ruin\le\luin\int_\Omega |A_t|^{rp}\d\mu(t)\ruin^{\frac1p}\luin\int_\Omega |B_t|^{rq}\d\mu(t)\ruin^{\frac1q}\supess_{t\in\Omega}\|X_t\|^r.$$
The latter can be extended to all $A_t$, $B_t$ such that $\int_\Omega|A_t|^p\d\mu(t)$, $\int_\Omega|B_t|^q\d\mu(t)$ exist as weak-$8$ integrals. In particular, for any $Q$ norm $\|\cdot\|_Q$ we have
$$\left\|\int_\Omega A_t^*X_tB_t\d\mu(t)\right\|_Q\le\left\|\int_\Omega |A_t|^{p}\d\mu(t)\right\|_Q^{\frac1p}\left\|\int_\Omega |B_t|^{q}\d\mu(t)\right\|_Q^{\frac1q}\supess_{t\in\Omega}\|X_t\|.$$

\emph{($iii$)} If $A_t$ and $B_t$ are families consisting of mutually commuting normal operators, $X\in B(H)$ arbitrary and $\mu(\Omega)<+\infty$, then for all $Q$-norms $\|\cdot\|_Q$ there holds
$$\left\|\int_\Omega A_t^*XB_t\d\mu(t)\right\|_Q\le\left\|\left(\int_\Omega |A_t|^{p}\d\mu(t)\right)^{\frac1p}X\left(\int_\Omega |B_t|^{q}\d\mu(t)\right)^{\frac1q}\right\|_Q.$$
\end{corollary}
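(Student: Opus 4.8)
All three parts are instances of Theorem~\ref{glavna} applied to the concrete module $M=L^2(\Omega,\mu)$ over $B(H)$, whose inner product is $\skp AB=\int_\Omega A_t^*B_t\,\d\mu(t)$. The operator I would feed into Theorem~\ref{glavna} is pointwise multiplication $T$ by the essentially bounded family $(X_t)$ (for part~($iii$), by the constant $X$), i.e.\ $(Ty)_t=X_ty_t$. The preliminary point to settle is that $T$ is adjointable and has a polar decomposition: $T^*$ is multiplication by $X_t^*$, and writing the pointwise polar decompositions $X_t=U_t|X_t|$, multiplication by $U_t$ and by $|X_t|$ assemble to adjointable operators $U$, $|T|$ on $M$ with $T=U|T|$; all of this, and the convergence of the integrals below, is controlled by $\supess_t\|X_t\|<\infty$ together with domination by $\int_\Omega|A_t|^2\,\d\mu$ and $\int_\Omega|B_t|^2\,\d\mu$. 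Then $|T^*|$, $|T|$, $f(|T^*|)^2$, $g(|T|)^2$ are multiplications by $|X_t^*|$, $|X_t|$, $f(|X_t^*|)^2$, $g(|X_t|)^2$, so with $x=(A_t)$ and $y=(B_t)$ one has $\skp x{Ty}=\int_\Omega A_t^*X_tB_t\,\d\mu$, $\skp x{f(|T^*|)^2x}=\int_\Omega A_t^*f(|X_t^*|)^2A_t\,\d\mu$ and $\skp y{g(|T|)^2y}=\int_\Omega B_t^*g(|X_t|)^2B_t\,\d\mu$. Substituting into (\ref{HilbertPQR}) is exactly the first inequality of part~($i$); the ``in particular'' is the special case $f(t)=t^\alpha$, $g(t)=t^\beta$.

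For part~($ii$) I would start from the particular form of ($i$) and bound $|X_t^*|^{2\alpha}\le\sigma^{2\alpha}$, $|X_t|^{2\beta}\le\sigma^{2\beta}$ with $\sigma:=\supess_t\|X_t\|$, so that $\int_\Omega A_t^*|X_t^*|^{2\alpha}A_t\,\d\mu\le\sigma^{2\alpha}\int_\Omega|A_t|^2\,\d\mu$ and similarly on the $B$-side; using monotonicity of unitarily invariant norms on positive operators and pulling the scalars out through the powers, the two factors of $\sigma$ combine to $\sigma^r$. The remaining — and main — step is to pass from $\luin(\int_\Omega|A_t|^2\,\d\mu)^{pr/2}\ruin$ to $\luin\int_\Omega|A_t|^{pr}\,\d\mu\ruin$: since $r\ge2$ gives $pr/2\ge1$ and $\mu(\Omega)<\infty$, this is a norm version of Jensen's inequality, namely $\luin(\int_\Omega C_t\,\d\mu)^s\ruin\le\mu(\Omega)^{s-1}\luin\int_\Omega C_t^s\,\d\mu\ruin$ for positive $C_t$ and $s\ge1$, applied with $C_t=|A_t|^2$ and $s=pr/2$ (and $s=qr/2$ on the other side); for a normalized measure the $\mu(\Omega)$-factors disappear. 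The extension to $A_t$, $B_t$ for which only $\int_\Omega|A_t|^p\,\d\mu$, $\int_\Omega|B_t|^q\,\d\mu$ are assumed to exist follows by applying the inequality to the families truncated where $\|A_t\|$ or $\|B_t\|$ exceeds $n$ and letting $n\to\infty$. The $Q$-norm statement is the case $r=2$ together with the defining property $\|Y\|_Q=\luin Y^*Y\ruin^{1/2}$ of a $Q$-norm and a power-mean comparison.

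Part~($iii$) is the delicate one, since the constant $X$ must be left sandwiched and the right-hand side is a single $Q$-norm rather than a product. Here I would take $T$ to be multiplication by the constant $X$ and exploit that the $A_t$ are commuting normals: by Fuglede--Putnam each $A_s$ commutes with every $A_t^*$, so the commutative von Neumann algebra they generate contains $P:=(\int_\Omega|A_t|^p\,\d\mu)^{1/p}$, whence $P$ commutes with all $A_s$ and $A_s^*$; likewise $Q':=(\int_\Omega|B_t|^q\,\d\mu)^{1/q}$ commutes with every $B_s$. Realizing these algebras as multiplication algebras on direct integral decompositions of $H$ turns $A_t$, $B_t$ into scalar multiplications fibrewise, so that $\int_\Omega A_t^*XB_t\,\d\mu$ becomes a sandwiched average to which the scalar H\"older inequality applies on each fibre; reassembling and invoking the defining relation of the $Q$-norm yields $\|\int_\Omega A_t^*XB_t\,\d\mu\|_Q\le\|PXQ'\|_Q$. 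Equivalently, when $P$ and $Q'$ are invertible one writes $\int_\Omega A_t^*XB_t\,\d\mu=P\big(\int_\Omega(P^{-1}A_t^*)\,X\,(B_tQ'^{-1})\,\d\mu\big)Q'$ and estimates the inner average by an operator Cauchy--Schwarz/H\"older inequality, using that everything commutes so that the power-mean relations between $\int_\Omega|A_t|^2\,\d\mu$ and $(\int_\Omega|A_t|^p\,\d\mu)^{2/p}$ are scalar. I expect the correct matching of exponents in this last reduction, and the passage to the $Q$-norm, to be the principal obstacle; parts~($i$) and ($ii$) are essentially bookkeeping on top of Theorem~\ref{glavna}.
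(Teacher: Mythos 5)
Parts ($i$) and ($ii$) of your proposal follow the paper's own route: the multiplication operator $T_X(A_t)=(X_tA_t)$ with pointwise polar decomposition $T_X=T_UT_{|X|}$ is fed into Theorem~\ref{glavna}, and ($ii$) is obtained from the bound $A_t^*|X_t^*|^{2\alpha}A_t\le\supess\|X_t\|^{2\alpha}|A_t|^2$ together with the norm Jensen inequality $\luin\varphi\left(\int_\Omega A_t\d\mu\right)\ruin\le\luin\int_\Omega\varphi(A_t)\d\mu\ruin$ for the convex powers $t^{pr/2}$, $t^{qr/2}$; the paper disposes of the normalization of $\mu$ by homogeneity where you carry the explicit $\mu(\Omega)^{s-1}$ constant, and both treatments of the $Q$-norm consequence take $r=2$ and a square root. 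No substantive difference there.

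Part ($iii$) is where your proposal has a genuine gap, in both of the routes you sketch. The direct-integral route fails at the decisive point: the commuting normal families $A_t$, $B_t$ can be simultaneously diagonalized, but $X$ is an \emph{arbitrary} operator and is not decomposable with respect to that direct integral, so there is no ``fibrewise scalar H\"older inequality'' to apply, nor a way to assemble a $Q$-norm of $\int_\Omega A_t^*XB_t\,\d\mu$ from fibres. Your second route has the correct renormalization identity but uses it the wrong way around: after writing $\int_\Omega A_t^*XB_t\,\d\mu=P\left(\int_\Omega(P^{-1}A_t^*)X(B_tQ'^{-1})\,\d\mu\right)Q'$ you propose to estimate the inner average $Z$ on its own, but then you would need $\|PZQ'\|_Q\le\|PXQ'\|_Q$ to follow from a bound on $Z$ alone, which it does not. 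The paper's proof keeps the sandwiched block together: using commutativity (which you correctly extract from normality and Fuglede--Putnam), one writes $\int_\Omega A_t^*XB_t\,\d\mu=\int_\Omega \hat A_t^*\,\tilde X\,\hat B_t\,\d\mu$ with $\hat A_t=A_t\left(\int_\Omega|A_s|^p\d\mu(s)+\e I\right)^{-1/p}$, $\hat B_t=B_t\left(\int_\Omega|B_s|^q\d\mu(s)+\e I\right)^{-1/q}$ and $\tilde X=\left(\int_\Omega|A_s|^p\d\mu(s)+\e I\right)^{1/p}X\left(\int_\Omega|B_s|^q\d\mu(s)+\e I\right)^{1/q}$, so that $\int_\Omega|\hat A_t|^p\d\mu\le I$ and $\int_\Omega|\hat B_t|^q\d\mu\le I$; the normalized case (which the paper derives from item ($ii$)) applied to the triple $(\hat A_t,\hat B_t,\tilde X)$ then produces $\|\tilde X\|_Q$ on the right-hand side, and one lets $\e\to0$ using the lower semicontinuity of unitarily invariant norms with respect to weak limits. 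The $\e$-regularization also removes the invertibility assumption on $P$, $Q'$ that your sketch requires. In short, the missing idea is to absorb the weight $\left(\int|A_t|^p\right)^{1/p}X\left(\int|B_t|^q\right)^{1/q}$ into the \emph{middle} slot before invoking the normalized inequality, rather than trying to peel $P$ and $Q'$ off afterwards.
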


\begin{proof} We have:

($i$) Consider the mapping $T_X:L^2(\Omega,\mu)\to L^2(\Omega,\mu)$ given by
$$T_X(A_t)=(X_tA_t).$$
It is easy to see that $T_X$ is bounded, adjointable, $\|T_x\|=\supess\|X_t\|$, as well as that $T_X$ has a polar decomposition $T_X=T_UT_{|X|}$, where $X_t=U_t|X_t|$ is the polar decomposition of $X_t$. Apply Theorem \ref{glavna}.

($ii$) First, note that $A_t^*|X^*_t|^{2\alpha}A_t\le A_t^*A_t\|X^*_t\|^{2\alpha}\le|A_t|^2\supess\|X_t\|^{2\alpha}$ and similar for $B_t$. Then recall the Jensen inequality
$$\luin\varphi\left(\int_\Omega A_t\d\mu(t)\right)\ruin\le\luin\int_\Omega\varphi(A_t)\d\mu(t)\ruin,$$
which holds for convex functions $\varphi:[0,+\infty)\to[0,+\infty)$, pointwise positive $A_t$ and $\mu(\Omega)=1$, see \cite[Proposition 2.5.]{KeckicBJMA}, and note that $t\mapsto t^{pr/2}$, $t^{qr/2}$ are convex for $r\ge 2$, $p$, $q\ge1$. Finally, note that both sides of the required inequality are homogeneous and hence $\mu(\Omega)$ can be canceled.

For the second inequality, recall that unitarily invariant norm $\luin\cdot\ruin$ is called $Q$-norm, if there is an other unitarily invariant norm $\|\cdot\|$ such that $\luin A\ruin=\||A|^2\|^{1/2}$. Now, take $r=2$ in the previous inequality and take a power to the $1/2$.

($iii$) For $X_t\equiv X$ and $\int_\Omega|A_t|^p\d\mu(t)$, $\int_\Omega|B_t|^q\d\mu(t)\le I$ the result follows from the previous item. In general case, note that $A_t$ as well as $A_t^*$ commutes with $\int_\Omega|A_t|^p\d\mu(t)$. Choose $\e>0$ and apply the special case to
$$A_t\left(\int_\Omega|A_t|^p\d\mu(t)+\e I\right)^{-1/p},\qquad B_t\left(\int_\Omega|B_t|^q\d\mu(t)+\e I\right)^{-1/q}\qquad\mbox{and}$$
$$\left(\int_\Omega|A_t|^p\d\mu(t)\right)^{1/p}X\left(\int_\Omega|B_t|^q\d\mu(t)\right)^{1/q}$$
instead of $A_t$, $B_t$ and $X$. We obtain
\begin{multline*}\left\|\int_\Omega A_t^*XB_t\d\mu(t)\right\|_Q=\\=\left\|\int_\Omega A_t^*\left(\int_\Omega+\e I\right)^{-\frac1p}\left(\int_\Omega+\e I\right)^{\frac1p}X\left(\int_\Omega+\e I\right)^{\frac1q}\left(\int_\Omega+\e I\right)^{-\frac1q}B_t\d\mu(t)\right\|_Q\le\\\le\left\|\left(\int_\Omega |A_t|^{p}\d\mu(t)+\e I\right)^{\frac1p}X\left(\int_\Omega |B_t|^{q}\d\mu(t)+\e I\right)^{\frac1q}\right\|_Q.
\end{multline*}
Finally, let $\e\to0$. (Notice the well known lower semicontinuity of unitarily invariant norms, with respect to weak limit.)
\end{proof}

\begin{corollary} Let us consider the discrete case. Let $\gamma_n\in\R^+$ be a sequence, $A_n$, $B_n$, $X_n\in B(H)$, and let $p$, $q$, $r>0$, $1/p+1/q=1$.

\emph{($i$)} \cite[Theorem 21.]{AlbadawiPositivity2012} (see also Theorem 15 of the same reference) If $\sum_{n=1}^{+\infty}\gamma_n|A_n|^2$, $\sum_{n=1}^{+\infty}\gamma_n|B_n|^2$ converge weakly, $\sup\|X_n\|<+\infty$ and $f$, $g:[0,+\infty)\to[0,+\infty)$ are continuous functions such that $f(t)g(t)\equiv t$ then
$$\luin\,\left|\sum_{n=1}^{+\infty}\gamma_nA_n^*X_nB_n\right|^r\,\ruin\le\luin\left(\sum_{n=1}^{+\infty} \gamma_nA_n^*f(|X_n^*|)^2A_n\right)^{\frac{pr}2}\ruin^{\frac1p}\luin\left(\sum_{n=1}^{+\infty} \gamma_nB_n^*g(|X_n|)^2B_n\right)^{\frac{qr}2}\ruin^{\frac1q}.$$

In particular, for $\alpha$, $\beta\ge0$, $\alpha+\beta=1$, we have
\begin{equation}\label{peta}\luin\,\left|\sum_{n=1}^{+\infty}\gamma_nA_n^*X_nB_n\right|^r\,\ruin\le\luin\left(\sum_{n=1}^{+\infty} \gamma_nA_n^*|X_n^*|^{2\alpha}A_n\right)^{\frac{pr}2}\ruin^{\frac1p}\luin\left(\sum_{n=1}^{+\infty} \gamma_nB_n^*|X_n|^{2\beta}B_n\right)^{\frac{qr}2}\ruin^{\frac1q}.
\end{equation}

\emph{($ii$)} If, in addition, $\sum_{n=1}^{+\infty}\gamma_n=1$ and $r\ge2$ then
$$\luin\sum_{n=1}^{+\infty}\left|\gamma_nA_n^*X_nB_n\right|^r\ruin\le\luin\sum_{n=1}^{+\infty} \gamma_n|A_n|^{pr}\ruin^{\frac1p}\luin\sum_{n=1}^{+\infty}\gamma_n|B_n|^{qr}\ruin^{\frac1q}\sup_{n\ge1}\|X_n\|.$$
The latter can be extended to all $A_n$, $B_n$ such that $\sum|A_n|^p$, $\sum |B_n|^q$ converge weakly. In particular, for any $Q$ norm $\|\cdot\|_Q$ we have
$$\left\|\sum_{n=1}^{+\infty}\gamma_nA_n^*X_nB_n\right\|_Q\le\left\|\sum_{n=1}^{+\infty} \gamma_n|A_n|^p\right\|_Q^{\frac1p}\left\|\sum_{n=1}^{+\infty}\gamma_n|B_n|^q\right\|_Q^{\frac1q}\sup_{n\ge1}\|X_n\|.$$

\emph{($iii$)} For all unitarily invariant norms $\luin\cdot\ruin$ and for $p\ge2$ there holds
$$\luin\sum_{n=1}^{+\infty}\gamma_nA_n^*X_nB_n\ruin\le\luin\sum_{n=1}^{+\infty} \gamma_n|A_n|^p\ruin^{\frac1p}\luin\sum_{n=1}^{+\infty}\gamma_n^{q/2}|B_n|^q\ruin^{\frac1q}\sup_{n\ge1}\|X_n\|.$$
In particular, for $m\in\N$ we have:
$$\luin\sum_{n=1}^mA_n^*X_nB_n\ruin\le m^{\left|\frac12-\frac1p\right|}\luin\sum_{n=1}^m |A_n|^p\ruin^{\frac1p}\luin\sum_{n=1}^{+\infty}|B_n|^q\ruin^{\frac1q}\sup_{n\ge1}\|X_n\|.$$
The last inequality reduces to \cite[Theorem 17]{AlbadawiPositivity2012} for $X_n\equiv I$. Note that the constant $m^{\left|\frac12-\frac1p\right|}$ is sharp, see \cite[Remark 18]{AlbadawiPositivity2012}.

\emph{($iv$)} If $A_n$ and $B_n$ are sequences of mutually commuting normal operators, $X\in B(H)$ arbitrary and $\sum_{n=1}^{+\infty}\gamma_n<+\infty$ then for all $Q$-norms $\|\cdot\|_Q$ there holds
$$\left\|\sum_{n=1}^{+\infty}\gamma_nA_n^*XB_n\right\|_Q\le\left\|\left(\sum_{n=1}^{+\infty}\gamma_n |A_n|^{p}\right)^{\frac1p}X\left(\sum_{n=1}^{+\infty}\gamma_n|B_n|^{q}\right)^{\frac1q}\right\|_Q.$$

\emph{($v$)} If $A_n$ and $B_n$ are sequences of mutually commuting normal operators, $X\in B(H)$ arbitrary and $m\in\N$ then for all unitarily invariant norms $\luin\cdot\ruin$ there holds
$$\luin\sum_{n=1}^mA_n^*XB_n\ruin\le m^{\left|\frac12-\frac1p\right|}\luin\left(\sum_{n=1}^m |A_n|^{p}\right)^{\frac1p}X\left(\sum_{n=1}^m|B_n|^{q}\right)^{\frac1q}\ruin.$$

\end{corollary}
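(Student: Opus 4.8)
The plan is to reduce the inequality, by a change of variables and an $\e$-regularisation like the one in the proof of item ($iv$), to a norm bound for a single linear map $\Psi$ on operator ideals, and then to obtain that bound by interpolating between the trace class $S_1$ and $B(H)$.

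First, since $\luin T^*\ruin=\luin T\ruin$ and $|\tfrac12-\tfrac1p|=|\tfrac12-\tfrac1q|$ whenever $1/p+1/q=1$, taking adjoints interchanges $(A_n,p)$ and $(B_n,q)$ without changing the claimed constant, so I may assume $p\ge2$ and $q\le2$. Put $C_0=\bigl(\sum_{n=1}^m|A_n|^p\bigr)^{1/p}$, $D_0=\bigl(\sum_{n=1}^m|B_n|^q\bigr)^{1/q}$ and $M=m^{1/2-1/p}$, and for $\e>0$ set $U_n=A_n(C_0^p+\e I)^{-1/p}$, $V_n=B_n(D_0^q+\e I)^{-1/q}$ (column operator $R\colon H\to H^m$, $Rh=(U_1h,\dots,U_mh)$, and similarly $S$ from the $V_n$, to be used below). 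Because the $A_n$ (resp.\ the $B_n$) mutually commute and are normal, $C_0$ commutes with each $A_n$ and $D_0$ with each $B_n$, so the $U_n$ and $V_n$ are again normal; and, applying in the commutative functional calculus the scalar inequalities $\sum x_n^{2/p}\le m^{1-2/p}(\sum x_n)^{2/p}$ (valid for $x_n\ge0$ since $2/p\le1$) and $\sum x_n^{2/q}\le(\sum x_n)^{2/q}$ (valid since $2/q\ge1$), one obtains
$$\sum_{n=1}^m U_n^*U_n=\sum_{n=1}^m U_nU_n^*\le M^2I,\qquad\sum_{n=1}^m V_n^*V_n=\sum_{n=1}^m V_nV_n^*\le I.$$
Moving the commuting factors past $A_n^*$ and $B_n$ shows that $\sum_{n=1}^m U_n^*(C_0XD_0)V_n$ equals $\sum_{n=1}^m A_n^*(C_0^p+\e I)^{-1/p}C_0\,X\,D_0(D_0^q+\e I)^{-1/q}B_n$, which converges strongly to $\sum_{n=1}^m A_n^*XB_n$ as $\e\to0$ (indeed $(C_0^p+\e I)^{-1/p}C_0$ and $D_0(D_0^q+\e I)^{-1/q}$ tend strongly to the projections onto $\ovl{\mathrm{ran}\,C_0}$ and $\ovl{\mathrm{ran}\,D_0}$, which fix $\mathrm{ran}\,A_n^*$ and $\mathrm{ran}\,B_n$ because $\ker C_0=\bigcap_k\ker A_k\subseteq\ker A_n=\ker A_n^*$, and likewise for the $B$'s). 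By lower semicontinuity of $\luin\cdot\ruin$ it therefore suffices, for each fixed $\e>0$, to bound $\luin\sum_{n=1}^m U_n^*YV_n\ruin$ with $Y=C_0XD_0$.

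Consider the linear map $\Psi(Y)=\sum_{n=1}^m U_n^*YV_n=R^*(I_m\otimes Y)S$. Then $\|R\|=\|\sum_n U_n^*U_n\|^{1/2}\le M$ and $\|S\|=\|\sum_n V_n^*V_n\|^{1/2}\le1$, and since $\|I_m\otimes Y\|=\|Y\|$ this gives $\|\Psi(Y)\|\le M\|Y\|$, i.e.\ $\|\Psi\|\le M$ on $B(H)$. On $S_1$, for a rank-one $Y=uv^*$ one has $\Psi(Y)=\sum_n(U_n^*u)(V_n^*v)^*$, so by Cauchy--Schwarz
$$\|\Psi(Y)\|_1\le\Bigl\langle u,\bigl(\textstyle\sum_n U_nU_n^*\bigr)u\Bigr\rangle^{1/2}\Bigl\langle v,\bigl(\textstyle\sum_n V_nV_n^*\bigr)v\Bigr\rangle^{1/2}\le M\|u\|\,\|v\|=M\|Y\|_1,$$
and summing over the singular value decomposition gives $\|\Psi\|\le M$ on $S_1$ as well. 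It is precisely here that normality of the $U_n,V_n$ enters, through $\sum_n U_nU_n^*=\sum_n U_n^*U_n$ and its analogue for the $V_n$.

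Finally, a linear map of norm $\le M$ on both $S_1$ and $B(H)$ has norm $\le M$ on every Ky Fan norm $\|\cdot\|_{(N)}$, since $\|\cdot\|_{(N)}$ is the $K$-functional $K(N,\cdot\,;S_1,B(H))$; by the Ky Fan dominance theorem it then has norm $\le M$ with respect to every unitarily invariant norm. Hence $\luin\sum_{n=1}^m A_n^*XB_n\ruin\le\liminf_{\e\to0}\luin\sum_{n=1}^m U_n^*(C_0XD_0)V_n\ruin\le M\luin C_0XD_0\ruin=m^{|1/2-1/p|}\luin C_0XD_0\ruin$, as asserted. The only genuinely delicate step is this interpolation; the change of variables and the $\e$-limit are the routine manipulations already used for item~($iv$).
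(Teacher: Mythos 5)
Your proposal proves only assertion ($v$); assertions ($i$)--($iv$) are nowhere addressed. In the paper, ($i$), ($ii$) and ($iv$) are literally the specialization of Corollary \ref{PosledicaNeprekidna} ($i$), ($ii$), ($iii$) to $\Omega=\N$, $\mu(E)=\sum_{n\in E}\gamma_n$, while ($iii$) needs one extra step: put $r=1$ and $X_n=I$ in the second inequality of ($i$), then apply the Jensen-type bound $\luin\f\left(\sum\gamma_nC_n\right)\ruin\le\luin\sum\gamma_n\f(C_n)\ruin$ for the convex $\f(t)=t^{p/2}$ to the first factor and $\luin\f\left(\sum C_n\right)\ruin\le\luin\sum\f(C_n)\ruin$ for the concave $\f(t)=t^{q/2}$ to the second; the constant $m^{|1/2-1/p|}$ comes from choosing $\gamma_n=1/m$. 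If this is meant as a proof of the whole corollary, those reductions must at least be stated; as written, four of the five claims are unproved.

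For ($v$) itself your argument is correct, and it takes a genuinely different route from the paper's. The paper obtains ($v$) from ($iii$) by the same normalization trick used for Corollary \ref{PosledicaNeprekidna} ($iii$) (replace $A_n$ by $A_n(\sum|A_k|^p+\e I)^{-1/p}$, etc., and $X$ by $C_0XD_0$), so everything ultimately rests on Theorem S and the Horn--Mathias theorem. You instead prove the key estimate $\luin\sum_nU_n^*YV_n\ruin\le M\luin Y\ruin$ directly: the operator-norm bound via the row/column factorization $R^*(I_m\otimes Y)S$, the trace-norm bound via the rank-one decomposition together with $\sum U_nU_n^*=\sum U_n^*U_n\le M^2I$ (the precise point where normality and commutativity enter), and then interpolation through the Ky Fan norms using $\|T\|_{(N)}=\min\{\|T_1\|_1+N\|T_2\|:T=T_1+T_2\}$ and Ky Fan dominance. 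The individual steps all check out: the scalar inequalities $\sum x_n^{2/p}\le m^{1-2/p}(\sum x_n)^{2/p}$ and $\sum x_n^{2/q}\le(\sum x_n)^{2/q}$ transferred by commutative functional calculus, the reduction to $p\ge2$ by taking adjoints, and the $\e\to0$ limit justified by $\ker C_0=\bigcap_k\ker A_k\subseteq\ker A_n=\ker A_n^*$ plus lower semicontinuity. What your route buys is self-containedness and transparency about where normality is needed, bypassing the Hilbert-module machinery entirely for this item; what the paper's route buys is brevity, since ($v$) falls out of the general scheme already built for ($i$)--($iv$).
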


\begin{proof} 

($i$), ($ii$) and ($iv$) are special cases of ($i$), ($ii$) and ($iii$), respectively of the previous corollary, for $\Omega=\N$, $\mu(E)=\sum_{n\in E}\gamma_n$.

($iii$) Put $r=1$ and $X_n=I$ in (\ref{peta}). The function $t\mapsto t^{p/2}$ is convex and $t\mapsto t^{q/2}$ is concave. Apply, aforementioned Jensen inequality to the first factor, and the inequality
$$\luin\varphi(\sum B_n)\ruin\le\luin\sum\varphi(B_n)\ruin$$
which holds for concave $\varphi$ (see \cite[Lemma 2.4]{KeckicBJMA}) to the second factor.

($v$) This can be derived from ($iii$) in the same way as in the previous corollary the third assertion is derived form the first.

\end{proof}

\section{An inequality in operator ordering}

In the paper \cite{SeoAFA2014} an other inequality in Hilbert modules was proved, namely

{\renewcommand{\thetheoremX}{S2}%
\begin{theoremX}\emph{(\cite[Theorem 4.1]{SeoAFA2014})}
Let $p>1$, $1/p+1/q=1$ and let $A$, $B$ be positive adjointable operators on some Hilbert $C^*$-module over a $C^*$-algebra. Then
$$\skp x{B^q\sharp_{1/p}A^px}\le\skp x{B^qx}\sharp_{1/p}\skp x{A^px},$$
where $\sharp_\theta$ stands for weighted geometric mean:
$$A\sharp_\theta B:=A^{1/2}(A^{-1/2}BA^{-1/2})^\theta A^{1/2},$$
for $A$ invertible, and $A\sharp_\theta B=\lim\limits_{\varepsilon\downarrow0}(A+\varepsilon I)\sharp_\theta B$, otherwise.
\end{theoremX}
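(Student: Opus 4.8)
This is a Jensen-type inequality for the weighted operator mean $\sharp_{1/p}$; the exponents $p,q$ serve only to fix the two positive arguments, so writing $E:=B^q$, $F:=A^p$ (arbitrary positive adjointable operators) it suffices to prove
$$\skp x{(E\sharp_\theta F)x}\le\skp x{Ex}\sharp_\theta\skp x{Fx},\qquad\theta=1/p\in(0,1),$$
for every $x\in M$, both sides being taken in the underlying $C^*$-algebra (or its multiplier algebra). After replacing $E$, $F$ by $E+\e$, $F+\e$ one may first assume $E$, $F$ invertible and recover the general case by letting $\e\downarrow0$. The idea is to bootstrap everything from the case $\theta=1/2$.

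For $\theta=1/2$, put $Z=E\sharp F$; by property (\ref{Positive}) the operator matrix $\left[\begin{smallmatrix}E&Z\\Z&F\end{smallmatrix}\right]$ is positive on $M\oplus M$. Compressing along the vectors $(x,0)$ and $(0,x)$ — the resulting $2\times2$ matrix over the coefficient algebra is a Gram matrix, hence positive — gives $\left[\begin{smallmatrix}\skp x{Ex}&\skp x{Zx}\\\skp x{Zx}&\skp x{Fx}\end{smallmatrix}\right]\ge0$, and since $\skp x{Zx}\ge0$, property (\ref{Positive}) read in the coefficient algebra yields $\skp x{Zx}\le\skp x{Ex}\sharp\skp x{Fx}$, which is the assertion for $\theta=1/2$. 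The same property furnishes the two facts I use next: $\sharp$ is monotone in each variable, and invariant under invertible congruences, $(T^*ET)\sharp(T^*FT)=T^*(E\sharp F)T$.

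To move from $\theta=1/2$ to arbitrary dyadic $\theta$, I would use
$$(E\sharp_a F)\sharp(E\sharp_b F)=E\sharp_{(a+b)/2}F,$$
which follows by conjugating with $E^{-1/2}$ and using the commuting case: $E^{-1/2}(E\sharp_s F)E^{-1/2}=(E^{-1/2}FE^{-1/2})^s$, and powers of a single operator commute. Feeding this into the $\theta=1/2$ inequality, together with the inductive hypotheses for $\sharp_a$, $\sharp_b$ and the monotonicity of $\sharp$, produces the inequality for $\sharp_{(a+b)/2}$; starting from $\theta\in\{0,1/2,1\}$ this reaches every dyadic $\theta$. Finally $\theta\mapsto E\sharp_\theta F=E^{1/2}(E^{-1/2}FE^{-1/2})^\theta E^{1/2}$ is norm-continuous, so density of the dyadics gives the inequality for $\theta=1/p$; letting $\e\downarrow0$ and using continuity of $\sharp_\theta$ in each variable removes the invertibility hypothesis.

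The only real care needed is bookkeeping: coordinating the dyadic approximation of the exponent with the $\e\downarrow0$ limit, and checking that the midpoint identity, the congruence invariance, the monotonicity, and the continuity of $\sharp_\theta$ — all standard for Kubo--Ando means, cf.\ \cite{Ando1978} — survive the passage from Hilbert spaces to Hilbert modules; none of this is a genuine obstacle. An alternative route bypassing the dyadic induction inserts the integral representation $t^\theta=\frac{\sin\theta\pi}{\pi}\int_0^\infty\frac{t}{t+\lambda}\lambda^{\theta-1}\,\d\lambda$ into the functional calculus, rewrites $E\sharp_\theta F=\frac{\sin\theta\pi}{\pi}\int_0^\infty\bigl(E:(\lambda^{-1}F)\bigr)\lambda^{\theta-1}\,\d\lambda$ with $:$ the parallel sum, and integrates the one-line Jensen inequality $\skp x{(E:F)x}\le\skp x{Ex}:\skp x{Fx}$ (itself immediate from the parallel-sum analogue of property (\ref{Positive})); that only moves the difficulty to a convergence check on the integral.
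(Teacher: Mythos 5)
The paper does not prove this statement at all: it is imported verbatim as \cite[Theorem 4.1]{SeoAFA2014}, so your proposal is not competing with an argument in the text but supplying one. On its own terms your argument is sound. The $\theta=1/2$ case is exactly right: positivity of $\left[\begin{matrix}E&Z\\Z&F\end{matrix}\right]$ from property (\ref{Positive}), positivity of the Gram matrix of $S^{1/2}(x,0)$ and $S^{1/2}(0,x)$ in $M_2(A)$ (this is in effect how \cite{FFMS-JMAA2012} proves (\ref{CSsharp})), and then the maximality characterization read back in the coefficient algebra. The midpoint identity $(E\sharp_aF)\sharp(E\sharp_bF)=E\sharp_{(a+b)/2}F$ via congruence by $E^{-1/2}$, the dyadic induction using monotonicity of $\sharp$, and the passage to general $\theta$ by continuity are all standard Kubo--Ando manipulations and do go through for adjointable module maps. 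The one place where your ``bookkeeping'' remark hides a real (though easily fixed) point: regularizing $E,F$ to $E+\e$, $F+\e$ does \emph{not} make $\skp x{Ex}$, $\skp x{Fx}$ invertible in $A$ (e.g.\ when $\skp xx$ is not invertible), so the midpoint identity and the $\theta$-continuity on the right-hand side cannot be invoked for invertible arguments there. The clean repair is to prove, by the same induction, the $\delta$-perturbed inequality $\skp x{(E\sharp_\theta F)x}\le(\skp x{Ex}+\delta)\sharp_\theta(\skp x{Fx}+\delta)$ for all dyadic $\theta$ and all $\delta>0$ (the induction step then only ever applies $\sharp_a$, $\sharp_b$ to invertible elements of $A$), pass to general $\theta$ by continuity, and let $\delta\downarrow0$ at the end using the downward continuity of $\sharp_\theta$. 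With that adjustment the proof is complete and gives a self-contained derivation of a result the paper leaves as a citation.
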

}

Applying this to "diagonal" operators $X_t\mapsto A_tX_t$ and $X_t\mapsto B_tX_t$ for pointwise positive and weakly measurable families $A_t$ and $B_t$, we obtain the following corollary:
\begin{corollary} We have

\begin{enumerate}

\item For a measurable space $(\Omega,\mu)$ and weakly measurable families $A_t$, $B_t$, of positive Hilbert space operators, $p$, $q\ge1$, $1/p+1/q=1$ we have
$$\int_\Omega B_t^q\;\sharp_{1/p}\;A_t^p\d\mu(t)\le\left(\int_\Omega B_t^q\d\mu(t)\right)\;\sharp_{1/p}\;\left(\int_\Omega A_t^p\d\mu(t)\right)$$

\item For sequences $A_n$, $B_n$, of positive Hilbert space operators, $p$, $q\ge1$, $1/p+1/q=1$ we have
$$\sum_{n=1}^{+\infty} B_n^q\;\sharp_{1/p}\;A_n^p\le\left(\sum_{n=1}^{+\infty}B_n^q\right)\;\sharp_{1/p}\;\left(\sum_{n=1}^{+\infty}A_n^p\right)$$
\end{enumerate}
\end{corollary}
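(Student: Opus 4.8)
The plan is to deduce this corollary from Theorem~S2 by a ``diagonal operator'' device entirely analogous to the one already used in the proof of Corollary~\ref{PosledicaNeprekidna}. First I would fix the Hilbert module $M=L^2(\Omega,\mu)$ of weakly measurable families in $B(H)$ with $\int_\Omega C_t^*C_t\,\d\mu(t)$ weak-$*$ convergent, as in \cite[Example 2.3.]{KeckicCS}, and introduce the two multiplication operators $\mathcal A,\mathcal B:M\to M$ given by $\mathcal A(C_t)=(A_tC_t)$, $\mathcal B(C_t)=(B_tC_t)$. These are bounded, adjointable, and positive (since each $A_t$, $B_t$ is positive), and powers act diagonally: $\mathcal A^p(C_t)=(A_t^pC_t)$, and likewise for any continuous function of $\mathcal A$ or $\mathcal B$ by the functional calculus, because the multiplication operators commute with the spectral projections fibrewise. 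Hence the weighted geometric mean also acts diagonally, $\mathcal B^q\sharp_{1/p}\mathcal A^p=(B_t^q\sharp_{1/p}A_t^p)$, first for $\mathcal A$ invertible via the explicit formula $\mathcal A^{1/2}(\mathcal A^{-1/2}\mathcal B\mathcal A^{-1/2})^{\theta}\mathcal A^{1/2}$, and then in general by the $\varepsilon\downarrow0$ limit, noting that $(A_t+\varepsilon I)$ is a bounded adjointable multiplication operator and the limit is taken in the strong sense fibrewise.

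Next I would pick the vector $x\in M$ to be the constant family $x=(I)_{t\in\Omega}$, the identity operator in each fibre. With this choice $\skp xy=\int_\Omega y_t\,\d\mu(t)$ for any $y\in M$, so that $\skp x{\mathcal B^q\sharp_{1/p}\mathcal A^p\,x}=\int_\Omega B_t^q\sharp_{1/p}A_t^p\,\d\mu(t)$, while $\skp x{\mathcal B^qx}=\int_\Omega B_t^q\,\d\mu(t)$ and $\skp x{\mathcal A^px}=\int_\Omega A_t^p\,\d\mu(t)$. Applying Theorem~S2 to $\mathcal A$, $\mathcal B$, and this $x$ yields precisely assertion~(1):
\begin{equation*}
\int_\Omega B_t^q\;\sharp_{1/p}\;A_t^p\d\mu(t)\le\left(\int_\Omega B_t^q\d\mu(t)\right)\;\sharp_{1/p}\;\left(\int_\Omega A_t^p\d\mu(t)\right).
\end{equation*}
Assertion~(2) is the special case $\Omega=\N$, $\mu(E)=\Card E$ (counting measure), where the module consists of sequences $C_n$ with $\sum_n C_n^*C_n$ weakly convergent and $\skp xy=\sum_n y_n$ for $x=(I)_{n\ge1}$; no separate argument is needed.

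The only genuine point requiring care — and the step I expect to be the main obstacle — is justifying that the weighted geometric mean commutes with the diagonal (direct-integral) structure, i.e.\ that $\mathcal B^q\sharp_{1/p}\mathcal A^p$ is again the multiplication operator by $t\mapsto B_t^q\sharp_{1/p}A_t^p$, together with the measurability and integrability of that family (so that it genuinely defines an element of $M$ to which the inner product can be applied). For the invertible case this is immediate from the algebraic formula and diagonality of functional calculus; the delicate part is interchanging the $\varepsilon\downarrow0$ limit defining $\sharp_{1/p}$ with the ``pointwise in $t$'' description, which one handles by recalling that $\varepsilon\mapsto(A+\varepsilon I)\sharp_\theta B$ is decreasing and the limit is a strong-operator limit (see \cite{Ando1978}), applied fibrewise and then globally on $M$; boundedness of the limiting family follows from $0\le B_t^q\sharp_{1/p}A_t^p\le (1-1/p)B_t^q+(1/p)A_t^p$ by the arithmetic–geometric mean inequality for operators, which also secures weak-$*$ convergence of $\int_\Omega B_t^q\sharp_{1/p}A_t^p\,\d\mu(t)$ once $\int_\Omega A_t^p\,\d\mu(t)$ and $\int_\Omega B_t^q\,\d\mu(t)$ are assumed to exist. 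Everything else is a direct substitution into Theorem~S2.
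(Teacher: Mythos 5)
Your overall strategy is exactly the paper's: the author's entire ``proof'' is the sentence preceding the corollary, namely to apply Theorem~S2 to the diagonal multiplication operators $\mathcal A(C_t)=(A_tC_t)$, $\mathcal B(C_t)=(B_tC_t)$ on the module $L^2(\Omega,\mu)$, and your verification that the weighted geometric mean of two such diagonal operators is again the multiplication operator by $t\mapsto B_t^q\sharp_{1/p}A_t^p$ (first for $\mathcal B^q+\varepsilon$ invertible via the explicit formula, then by the monotone $\varepsilon\downarrow0$ limit) is the right way to fill in the only genuinely nontrivial point, which the paper leaves entirely implicit.

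There is, however, one step that fails as written: the test vector $x=(I)_{t\in\Omega}$ belongs to $L^2(\Omega,\mu)$ only when $\mu(\Omega)<+\infty$, since $\skp xx=\mu(\Omega)\,I$ must exist. In particular, assertion~(2) uses counting measure on $\N$, so $(I)_{n\ge1}$ is \emph{not} an element of the module and ``no separate argument is needed'' is not justified. The repair is routine but must be stated: apply your argument on subsets $\Omega_m\uparrow\Omega$ with $\mu(\Omega_m)<+\infty$ (resp.\ to the partial sums $n\le m$), obtaining
$$\int_{\Omega_m}B_t^q\,\sharp_{1/p}\,A_t^p\,\d\mu(t)\le\left(\int_{\Omega_m}B_t^q\,\d\mu(t)\right)\sharp_{1/p}\left(\int_{\Omega_m}A_t^p\,\d\mu(t)\right)\le\left(\int_{\Omega}B_t^q\,\d\mu(t)\right)\sharp_{1/p}\left(\int_{\Omega}A_t^p\,\d\mu(t)\right),$$
where the second inequality uses the joint operator monotonicity of $\sharp_{1/p}$; then let $m\to\infty$ on the left by monotone (weak-$*$) convergence of the positive integrand. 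A smaller caveat of the same kind: $\mathcal A$ and $\mathcal B$ are bounded adjointable on $L^2(\Omega,\mu)$ only if $\sup_t\|A_t\|$ and $\sup_t\|B_t\|$ are finite, a hypothesis the corollary does not state and which should either be assumed or removed by a further truncation. With these additions your argument is complete and coincides with the paper's intended proof.
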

\bibliographystyle{siam}
\bibliography{Holder}

\end{document}